\documentclass[10.5pt]
{amsart}
\usepackage{amssymb,amsthm,amsmath,latexsym}

\usepackage{float}
\usepackage{url}
\usepackage{systeme}
\usepackage{wrapfig}
\usepackage[margin=0.7in]{geometry}
\usepackage{tikz}
\usepackage{pgfplots}
\usepackage{xcolor,colortbl}
\pgfplotsset{compat=1.18}

\usepackage{array}

\definecolor{gold}{RGB}{218,165,32}

\usepackage{anyfontsize}
\fontsize{10.5}{13}\selectfont

\usepackage[justification=centering]{caption}
\usetikzlibrary{shapes,backgrounds,arrows.meta}
\usepackage{tikz-3dplot}

\usepackage{quoting}
\newtheorem*{proposition}{Proposition}

\newtheorem*{conjecture}{Conjecture}

\newtheorem*{remark*}{Remark}
\newtheorem*{conjecture*}{Conjecture}

\makeatletter
\renewcommand*\env@matrix[1][*\c@MaxMatrixCols c]{% 
  \hskip -\arraycolsep
  \let\@ifnextchar\new@ifnextchar
  \array{#1}}
\makeatother

\makeatletter
\renewcommand{\section}{\@startsection{section}{1}{\z@}%
  {-2ex \@plus -0.5ex \@minus -.2ex}%  <-- Space before the title (Default was -3.5ex)
  {0.8ex \@plus .2ex}%                 <-- Space after the title (Default was 2.3ex)
  {\normalfont\Large\bfseries}}
\makeatother

\usepackage{hyperref}

\begin{document}

\title
{\small Discovering Heron's Triangle Area Formula in a Morning in the Rain
}
\markright{ \, Heron's Formula---In A Morning }

\author{Eric L.~Grinberg}
\date{} % This suppresses the date

%:
\newenvironment{acknowledgement}%       New acknowledgement environment
    {\large\bfseries Acknowledgement%
    \par\medskip\normalfont\normalsize}%
    {}%

\begin{abstract}
Motivations and derivations for Heron's triangle area formula abound. But how would one \emph{discover} the formula? We recount the challenge of designing a laboratory experiment to discover Heron's formula, and of doing so in the morning, in time for a noon class---a snapshot of collaboration with AI in fall 2025. We then reflect on extensions and generalizations.
\end{abstract}

\subjclass[2020]{Primary 51M25; Secondary 51M04, 97D40, 97U70}\keywords{Heron's Formula, discovery vs. derivation, experiment, human-AI collaboration}
\maketitle

%\author{ Eric Grinberg }
%\address{UMass Boston }
%\email{eric.grinberg@umb.edu }

\vspace{-20pt}
\section{Introduction}

Heron's formula for triangle area by side lengths occurs early in the curriculum: 
\[ \operatorname{area}(\triangle abc)= \sqrt{s(s-a)(s-b)(s-c)}\, ; \,\,\, s \equiv \frac{1}{2}(a+b+c)
\text{\qquad  ($s$ is for \emph{semi-perimeter})}
.
\] 
Sometimes it is motivated, sometimes derived, with very good resources available in the literature. But Heron's original proof was complicated. How did Heron discover the formula? Maybe the result was known to Archimedes, and was already in the ether during Heron's time. But how was the formula discovered in the first place? Could we design a laboratory experiment aimed at discovering it, just as in science classes we perform experiments to ``discover" laws of nature? And could we complete the design in real-time, before a noon class?

\begin{wraptable}[24]{r}
{0.40\textwidth}
\vspace{-1.5\baselineskip}
\centering
\scriptsize
\caption{Random triangles, gold-standard areas.}
\label{tab:triangles}
\setlength{\tabcolsep}{3pt}

% ------------------------------------------------------------
% Colors
% ------------------------------------------------------------
\definecolor{parchment}{RGB}{238,224,191}
\definecolor{parchmentwarm}{RGB}{246,230,190}
\definecolor{tablegray}{RGB}{132,118,94}
\definecolor{rulegray}{RGB}{158,144,116}
\definecolor{figbrown}{RGB}{72,42,19}
\definecolor{auxbrown}{RGB}{142,126,98}

\begin{tikzpicture}

% ------------------------------------------------------------
% The actual table: this determines the footprint.
% ------------------------------------------------------------
\node[anchor=north west, inner sep=0pt, outer sep=0pt] (tbl)
at (0,0)
{%
  \begin{minipage}{0.40\textwidth}
  \centering
  \arrayrulecolor{rulegray}
  \color{tablegray}
  \begin{tabular}{|c|c|c|c|c|}
  \hline
  $i$ & $a$ & $b$ & $c$ & GoldArea \\
  \hline
   1 & 2.341178 & 2.191749 & 1.058027 & 1.154186 \\
   2 & 1.973164 & 1.862354 & 1.051102 & 0.963894 \\
   3 & 2.523576 & 2.514548 & 0.516247 & 0.646708 \\
   4 & 2.618736 & 2.009315 & 0.741791 & 0.482996 \\
   5 & 2.517821 & 2.324329 & 1.840570 & 2.049419 \\
   6 & 2.932789 & 1.880102 & 1.446336 & 1.138238 \\
   7 & 2.654267 & 2.573512 & 2.046299 & 2.459089 \\
   8 & 2.261430 & 1.943380 & 0.614561 & 0.546850 \\
   9 & 2.447087 & 1.878682 & 0.956349 & 0.811159 \\
  10 & 2.503369 & 1.625011 & 1.057538 & 0.587580 \\
  11 & 2.652336 & 1.669632 & 1.192051 & 0.700776 \\
  12 & 2.045532 & 2.336471 & 1.579709 & 1.586595 \\
  13 & 2.726012 & 2.301773 & 1.169669 & 1.332507 \\
  14 & 2.536970 & 2.372056 & 0.898894 & 1.066113 \\
  15 & 2.688913 & 2.137186 & 1.071063 & 1.080001 \\
  16 & 2.899991 & 2.206734 & 1.166717 & 1.166371 \\
  17 & 2.629956 & 1.854568 & 1.203127 & 0.993562 \\
  18 & 2.629244 & 2.442761 & 1.393906 & 1.684140 \\
  19 & 2.758313 & 2.290698 & 1.043668 & 1.152307 \\
  20 & 2.888780 & 1.976768 & 1.006820 & 0.507586 \\
  \hline
  \end{tabular}
  \end{minipage}%
};

% ------------------------------------------------------------
% Parchment background tied exactly to the table rectangle.
% Pure TikZ approximation: warm wash, faint fibres, and stains.
% ------------------------------------------------------------
\begin{scope}[on background layer]

  % Base parchment
  \fill[parchment]
    ([xshift=-0.8mm,yshift=0.8mm]tbl.north west)
    rectangle
    ([xshift=0.8mm,yshift=-0.8mm]tbl.south east);

  % Soft warm wash
  \fill[parchmentwarm, opacity=0.18]
    ([xshift=-0.8mm,yshift=0.8mm]tbl.north west)
    rectangle
    ([xshift=0.8mm,yshift=-0.8mm]tbl.south east);

  % Subtle edge darkening
  \draw[figbrown, opacity=0.08, line width=1.1mm]
    ([xshift=-0.45mm,yshift=0.45mm]tbl.north west)
    rectangle
    ([xshift=0.45mm,yshift=-0.45mm]tbl.south east);

  % Faint horizontal fibre lines
  \foreach \yy in {0.06,0.12,...,0.96}{
    \draw[figbrown, opacity=0.030, line width=0.04mm]
      ($(tbl.south west)!\yy!(tbl.north west)$)
      --
      ($(tbl.south east)!\yy!(tbl.north east)$);
  }

  % Faint vertical fibre lines
  \foreach \xx in {0.07,0.14,...,0.98}{
    \draw[figbrown, opacity=0.022, line width=0.04mm]
      ($(tbl.south west)!\xx!(tbl.south east)$)
      --
      ($(tbl.north west)!\xx!(tbl.north east)$);
  }

  % A few fixed pale stains, deterministic rather than random
  \fill[figbrown, opacity=0.030]
    ($(tbl.south west)!0.20!(tbl.north east)$) circle [radius=1.4mm];

  \fill[figbrown, opacity=0.022]
    ($(tbl.south west)!0.46!(tbl.north east)$) circle [radius=2.1mm];

  \fill[figbrown, opacity=0.026]
    ($(tbl.south west)!0.72!(tbl.north east)$) circle [radius=1.7mm];

  \fill[white, opacity=0.08]
    ($(tbl.south west)!0.34!(tbl.north east)$) circle [radius=1.8mm];

\end{scope}

% ------------------------------------------------------------
% Geometric overlay.
%
% IMPORTANT:
% x and y use the same physical unit length. This preserves
% Euclidean geometry: circles remain circles, perpendicularity is
% preserved, and the incircle remains inscribed.
% ------------------------------------------------------------
\begin{scope}[scale=0.80, transform shape,
  shift={(tbl.south west)},
  x=0.40\textwidth,
  y=0.40\textwidth,
  shift={(0.17,0.025)}
]

  % Original coherent schematic triangle:
  % A=(1.55,3.60), B=(0,0), Gamma=(4.15,0).
  % Uniform scale k=0.13, translated so Gamma is near x=0.93.
  %
  % Do not shear this scope and do not independently move points.

  \coordinate (A) at (0.5920,0.8680);
  \coordinate (B) at (0.3905,0.4000);
  \coordinate (G) at (0.9300,0.4000); % Gamma

  % Incenter and scaled incircle radius.
  \coordinate (H) at (0.6264,0.5552);
  \def\IncircleRadius{0.1552}

  % Tangency points: perpendicular feet from H to the three sides.
  \coordinate (D) at (0.4838,0.6166); % Delta on AB
  \coordinate (E) at (0.6264,0.4000); % E on B Gamma
  \coordinate (Z) at (0.7522,0.6461); % Z on Gamma A

  % Auxiliary points.
  \coordinate (T) at (0.1168,0.4000); % Theta
  \coordinate (L) at (0.3905,0.0939); % Lambda

  % ----------------------------------------------------------
  % Main triangle and extension.
  % ----------------------------------------------------------
  \draw[
    line width=0.50mm,
    color=figbrown,
    line cap=round,
    line join=round
  ]
    (A)--(B)--(G)--cycle;

  \draw[
    line width=0.20mm,
    color=figbrown,
    line cap=round
  ]
    (T)--(G);

  % ----------------------------------------------------------
  % Incircle: true circle, tangent to all three sides.
  % ----------------------------------------------------------
  \draw[
    line width=0.28mm,
    color=figbrown,
    line cap=round
  ]
    (H) circle [radius=\IncircleRadius];

  % ----------------------------------------------------------
  % Construction lines.
  % ----------------------------------------------------------
  \draw[line width=0.14mm, color=figbrown, line cap=round] (H)--(A);
  \draw[line width=0.14mm, color=figbrown, line cap=round] (H)--(B);
  \draw[line width=0.14mm, color=figbrown, line cap=round] (H)--(G);

  \draw[line width=0.14mm, color=figbrown, line cap=round] (H)--(D);
  \draw[line width=0.14mm, color=figbrown, line cap=round] (H)--(E);
  \draw[line width=0.14mm, color=figbrown, line cap=round] (H)--(Z);

  % ----------------------------------------------------------
  % Clearer auxiliary dashed construction.
  % Still secondary, but easier to see.
  % ----------------------------------------------------------
  \draw[
    line width=0.24mm,
    color=auxbrown!65!figbrown,
    dash pattern=on 2.2pt off 1.6pt,
    line cap=round
  ]
    (H)--(L);

  \draw[
    line width=0.24mm,
    color=auxbrown!65!figbrown,
    dash pattern=on 2.2pt off 1.5pt,
    line cap=round
  ]
    (B)--(L);

  \draw[
    line width=0.24mm,
    color=auxbrown!65!figbrown,
    dash pattern=on 2.2pt off 1.5pt,
    line cap=round
  ]
    (G)--(L);

  % ----------------------------------------------------------
  % Small right-angle markers at the tangency points.
  % Decorative/indicative only; geometry remains defined above.
  % ----------------------------------------------------------
  \draw[line width=0.08mm, color=figbrown]
    ($(D)!0.012!(A)$)--++(0.010,-0.006)--($(D)!0.012!(H)$);

  \draw[line width=0.08mm, color=figbrown]
    ($(E)!0.012!(B)$)--++(0,0.012)--($(E)!0.012!(H)$);

  \draw[line width=0.08mm, color=figbrown]
    ($(Z)!0.012!(G)$)--++(-0.009,-0.008)--($(Z)!0.012!(H)$);

  % ----------------------------------------------------------
  % Point markers.
  % ----------------------------------------------------------
  \foreach \P in {A,B,G,H,D,E,Z,T,L}
    \fill[figbrown] (\P) circle (0.45mm);

  % ----------------------------------------------------------
  % Labels with subtle parchment-colored halos.
  % ----------------------------------------------------------

  % A
  \node[text=parchment!85!white, font=\fontsize{14}{14}\selectfont\itshape]
    at ($(A)+(0.002,0.028)$) {$A$};
  \node[text=figbrown, font=\fontsize{14}{14}\selectfont\itshape]
    at ($(A)+(0,0.030)$) {$A$};

  % B
  \node[text=parchment!85!white, font=\fontsize{14}{14}\selectfont\itshape]
    at ($(B)+(-0.020,-0.012)$) {$B$};
  \node[text=figbrown, font=\fontsize{14}{14}\selectfont\itshape]
    at ($(B)+(-0.022,-0.010)$) {$B$};

  % Gamma
  \node[text=parchment!85!white, font=\fontsize{14}{14}\selectfont\itshape]
    at ($(G)+(0.020,-0.008)$) {$\Gamma$};
  \node[text=figbrown, font=\fontsize{14}{14}\selectfont\itshape]
    at ($(G)+(0.018,-0.006)$) {$\Gamma$};

  % H
  \node[text=parchment!85!white, font=\fontsize{14}{14}\selectfont\itshape]
    at ($(H)+(0.026,0.002)$) {$H$};
  \node[text=figbrown, font=\fontsize{14}{14}\selectfont\itshape]
    at ($(H)+(0.024,0.004)$) {$H$};

  % Delta
  \node[text=parchment!85!white, font=\fontsize{14}{14}\selectfont\itshape]
    at ($(D)+(-0.028,0.002)$) {$\Delta$};
  \node[text=figbrown, font=\fontsize{14}{14}\selectfont\itshape]
    at ($(D)+(-0.030,0.004)$) {$\Delta$};

  % E
  \node[text=parchment!85!white, font=\fontsize{14}{14}\selectfont\itshape]
    at ($(E)+(0.020,-0.014)$) {$E$};
  \node[text=figbrown, font=\fontsize{14}{14}\selectfont\itshape]
    at ($(E)+(0.018,-0.012)$) {$E$};

  % Z
  \node[text=parchment!85!white, font=\fontsize{14}{14}\selectfont\itshape]
    at ($(Z)+(0.030,0.002)$) {$Z$};
  \node[text=figbrown, font=\fontsize{14}{14}\selectfont\itshape]
    at ($(Z)+(0.028,0.004)$) {$Z$};

  % Theta
  \node[text=parchment!85!white, font=\fontsize{14}{14}\selectfont\itshape]
    at ($(T)+(-0.038,0.002)$) {$\Theta$};
  \node[text=figbrown, font=\fontsize{14}{14}\selectfont\itshape]
    at ($(T)+(-0.040,0.004)$) {$\Theta$};

  % Lambda
  \node[text=parchment!85!white, font=\fontsize{14}{14}\selectfont\itshape]
    at ($(L)+(-0.020,-0.016)$) {$\Lambda$};
  \node[text=figbrown, font=\fontsize{14}{14}\selectfont\itshape]
    at ($(L)+(-0.022,-0.014)$) {$\Lambda$};

\end{scope}

% ------------------------------------------------------------
% Small proof-without-words triangle, added near the upper left.
% This block is independent and does not alter the main Heron
% diagram or the background table.
%
% Placement knobs:
%   xshift=0.018\textwidth  moves it right/left.
%   yshift=-0.105\textwidth moves it down/up from the table's top.
%
% To move UP, make yshift less negative, e.g. -0.095\textwidth.
% To move DOWN, make yshift more negative, e.g. -0.115\textwidth.
% ------------------------------------------------------------
\begin{scope}
  % Clip to the table rectangle so the small triangle never protrudes.
  \clip
    ([xshift=-0.8mm,yshift=0.8mm]tbl.north west)
    rectangle
    ([xshift=0.8mm,yshift=-0.8mm]tbl.south east);

  \begin{scope}[
    shift={([xshift=0.098\textwidth,yshift=-0.15\textwidth]tbl.north west)},
    x=0.40\textwidth,
    y=0.40\textwidth,
    scale=0.075,
    line cap=round,
    line join=round,
    every node/.style={font=\tiny, text=figbrown!80!black}
  ]

    % Triangle vertices
    \coordinate (pA) at (0,0);
    \coordinate (pB) at (5.2,0);
    \coordinate (pC) at (1.15,3.25);

    % Incenter and points of tangency
    \coordinate (pO) at (1.727,1.221);
    \coordinate (pD) at (1.727,0);
    \coordinate (pE) at (2.492,2.173);
    \coordinate (pF) at (0.576,1.628);

% Shaded pair of right triangles, as in the proof-without-words figure.
% Drawn a little more strongly so the shading remains visible over the table.
\fill[figbrown!26!parchment, opacity=0.92]
  (pO)--(pD)--(pB)--cycle;

\fill[figbrown!26!parchment, opacity=0.92]
  (pO)--(pE)--(pB)--cycle;

  % Incircle, made more visible with a pale underlay and darker top stroke.
\draw[
  parchment!90!white,
  line width=0.34mm,
  opacity=0.80
]
  (pO) circle[radius=1.221];

\draw[
  figbrown!85!black,
  line width=0.29mm,
  opacity=0.95
]
  (pO) circle[radius=1.221];

    % Outer triangle
    \draw[figbrown!82!black, line width=0.20mm]
      (pA)--(pB)--(pC)--cycle;

    % Segments from incenter
    \draw[figbrown!82!black, line width=0.17mm] (pO)--(pA);
    \draw[figbrown!82!black, line width=0.17mm] (pO)--(pB);
    \draw[figbrown!82!black, line width=0.17mm] (pO)--(pC);

    % Inradii to the three sides
    \draw[figbrown!70, line width=0.10mm] (pO)--(pD);
    \draw[figbrown!70, line width=0.10mm] (pO)--(pE);
    \draw[figbrown!70, line width=0.10mm] (pO)--(pF);

    % Tangency chords/partition lines
    \draw[figbrown!70, line width=0.10mm] (pF)--(pE);
    \draw[figbrown!70, line width=0.10mm] (pF)--(pD);

    % Small right-angle mark at the bottom tangency point
    \draw[figbrown!70, line width=0.09mm]
      ($(pD)+(-0.16,0)$)--($(pD)+(-0.16,0.16)$)--($(pD)+(0,0.16)$);

    % Labels for tangent lengths
    \node[below] at ($(pA)!0.50!(pD)$) {$x$};
    \node[below] at ($(pD)!0.52!(pB)$) {$y$};

    \node[left]  at ($(pA)!0.52!(pF)$) {$x$};
    \node[left]  at ($(pF)!0.52!(pC)$) {$z$};

    \node[right] at ($(pB)!0.52!(pE)$) {$y$};
    \node[right] at ($(pE)!0.52!(pC)$) {$z$};

    % Inradius label
    \node[right] at ($(pO)!0.55!(pD)$) {$r$};

  \end{scope}
\end{scope}

\end{tikzpicture}

% Reset table rule color for later tables, just in case.
\arrayrulecolor{black}

\end{wraptable}

\section{Morning}

\subsection{Background: the peril of starting to teach at noon} 

In a Monday class, during the fall semester of 2025, the class and I used Heron's formula for the area of a triangle in terms of its side lengths to give a visual, calculus-free proof of the area-maximizing property of equilateral triangles among all triangles of perimeter $1$. We motivated and derived Heron's formula using our preferred presentation \cite{klain-2004}, which we recommend for inclusion in the next edition of \emph{Proofs From THE BOOK} \cite{proofs_from_z_book-2018}. I woke up on the following rainy Wednesday morning with a nagging thought: yeah, yeah, one can motivate and derive Heron's formula, but how would one actually \emph{discover} it? Heron's proof was quite involved\footnote{See the foreground of Table~\ref{tab:triangles} for a replica of a diagram replicated from a copy of Heron's 60 CE book \emph{Metrica}\cite{bruins-1964}, and an image from a much simpler \emph{proofs without words} approach\cite{nelsen-2001}. We'll let the reader guess which is which.}, not likely paralleling a path to discovery. Could we do a ``laboratory" experiment to discover the formula? Could I design one in time for today's noon class? This is one of the perils of starting to teach at noon. Same-day lesson plan alterations are out of the question for a 9am class, but with hours to spare, and AI at hand\footnote{Initial tables and charts were produced by \emph{Claude}, later checked by \emph{ChatGPT} and \emph{Gemini}, and then by a human. \label{fn:whichAI}}, hmmm, maybe something could be done...

My commute involves a fair bit of walking, combined with a tram and a subway ride. I set out with my mobile phone in one hand and rain gear in the other, stopping to converse with an AI in rain-shelters, on train platforms, and on rides. The interaction style was influenced by E.~Mollick's approach to collaboration \cite{mollick-2024}.

\subsection{The Gold Standard}

I started by asking the AI for a table of twenty arbitrary, quasi-random triangles, listed with their side lengths and their areas, with areas not computed via Heron's formula. To ground the experiment in a pre-computational spirit, I imagined monks carefully cutting triangles from parchment, then measuring base length and height with great care. This table would be our gold standard. The choice of twenty as the number of triangles to use came from intuition and turned out to be sufficient to do the job. The initial table included a few rows that did not satisfy the triangle inequality, as a check by another AI revealed. This was fixed.

\subsection{Fit for a Quadratic}

\begin{wrapfigure}[22]{r}{8.2cm} % r for right, width of the wrapped area
\centering
\vspace{-10pt} % Optional: adjust vertical alignment
\begin{tikzpicture} \label{quadratic_fit}
\begin{axis}[
    width=7.5cm, % Reduced width to fit the wrap
    height=5cm,
    xlabel={},
    ylabel={Area},
    title={True Area vs. Quadratic  Regression},
    xtick={1,5,10,15,20}, % Simplified ticks for space
    xmin=0, xmax=21,
    ymin=0, ymax=2.8,
    legend style={at={(0.03,0.97)}, anchor=north west, font=\tiny, fill=white, fill opacity=0.8},
    grid=major,
    grid style={gray!30},
]
\addplot[only marks, 
    mark=*, 
    mark size=2.5pt, 
    mark options={fill=gold, draw=black, line width=0.6pt}] coordinates {
    (1, 1.154186) (2, 0.963894) (3, 0.646708) (4, 0.482996) (5, 2.049419)
    (6, 1.138238) (7, 2.459089) (8, 0.546850) (9, 0.811159) (10, 0.587580)
    (11, 0.700776) (12, 1.586595) (13, 1.332507) (14, 1.066113) (15, 1.080001)
    (16, 1.166371) (17, 0.993562) (18, 1.684140) (19, 1.152307) (20, 0.507586)
};
\addlegendentry{Actual area}
\addplot[only marks, mark=x, mark size=2.5pt, color=blue, thick] coordinates {
    (1, 1.048977) (2, 0.902595) (3, 0.332716) (4, 0.585775) (5, 1.978823)
    (6, 1.341786) (7, 2.373556) (8, 0.451449) (9, 0.831833) (10, 0.814128)
    (11, 0.958010) (12, 1.534520) (13, 1.253958) (14, 0.899088) (15, 1.060838)
    (16, 1.204169) (17, 1.084992) (18, 1.580475) (19, 1.079833) (20, 0.874480)
};
\addlegendentry{Quad Approx}
\end{axis}
\end{tikzpicture}

\vspace{0.2cm}

\begin{tikzpicture}
\begin{axis}[
    width=7.5cm, % Reduced width
    height=4cm,
    xlabel={Triangle index},
    ylabel={{\scriptsize Error (Actual $-$ Predicted)}},
    xtick={1,5,10,15,20},
    xmin=0, xmax=21,
    ymin=-0.45, ymax=0.45,
    grid=major,
    grid style={gray!30},
    ybar,
    bar width=4pt,
]
\addplot[fill=purple!70, draw=purple!50!black] coordinates {
    (1, 0.105209)  (2, 0.061299)  (3, 0.313992)  (4, -0.102779) (5, 0.070596)
    (6, -0.203548) (7, 0.085533)  (8, 0.095401)  (9, -0.020674) (10, -0.226548)
    (11, -0.257234)(12, 0.052075) (13, 0.078549)  (14, 0.167025) (15, 0.019163)
    (16, -0.037798)(17, -0.091430)(18, 0.103665)  (19, 0.072474) (20, -0.366894)
};
\end{axis}
\end{tikzpicture}
\caption{Quadratic fit results.}
\label{fig:quadratic_wrap}
\vspace{-10pt}
\end{wrapfigure}

Next, I asked the AI to do a \emph{quadratic} least squares fit of the triangle area function. My rationale was that the perimeter of a triangle with side lengths $\{a, b, c\}$ is linear in $a, b, c$, so I expected that in the lab I would surmise that the area should be quadratic. Of course, we aimed for a homogeneous polynomial, symmetric in $a,b,c$.

\begin{quoting}[leftmargin=0.5em, rightmargin=0.5em]
{\footnotesize I made a mental note to pause during class, ask students to plot a few random points on the parabola $y=x^2$, then make a linear regression fit and notice that a line does not do a good job approximating a parabola. (This would also provide a good opportunity to review the concept of \emph{least squares fit}.) I decided not to include this step in the AI conversation, but to save it for ``winging-it" in class. \par}
\end{quoting}

The results came back: the fit was fail-to-middling for some triangles, good for others. The failure is not merely numerical. It reflects a structural limitation.

\begin{proposition}
There is no quadratic polynomial in side lengths $a,b,c$ that expresses the area of $\triangle abc$.
\end{proposition}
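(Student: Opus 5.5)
The plan is to argue by contradiction: suppose some polynomial $Q(a,b,c)$ of degree at most two agrees with $\operatorname{area}(\triangle abc)$ for every triple $(a,b,c)$ satisfying the strict triangle inequalities. That region is a nonempty open cone in $\mathbb{R}^3$, so any polynomial identity valid on it is an identity of polynomials. The first step is to use scaling: replacing $(a,b,c)$ by $(ta,tb,tc)$ multiplies area by $t^2$. Hence $Q(ta,tb,tc)=t^2Q(a,b,c)$ for all $t>0$ on the cone, and comparing coefficients of powers of $t$ forces $Q$ to be homogeneous of degree two. Symmetry then follows the same way, since area is symmetric, so $Q=\alpha(a^2+b^2+c^2)+\beta(ab+bc+ca)$. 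Strictly speaking we do not even need homogeneity or symmetry; they only shrink the family we must rule out.

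The second step is to test $Q$ on degenerate triangles, where the area tends to zero. By continuity of both area and $Q$, $Q$ must vanish on the boundary of the cone, for example at $(1,1,0)$ and at $(2,1,1)$. From $(1,1,0)$ we get $2\alpha+\beta=0$. From $(2,1,1)$ we get $6\alpha+5\beta=0$. Together these give $\alpha=\beta=0$, so $Q\equiv 0$. This contradicts the equilateral triangle $(1,1,1)$, whose area is $\sqrt3/4\neq 0$.

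An alternative and more structural route gives the same contradiction. On the cone, $\operatorname{area}^2=s(s-a)(s-b)(s-c)$ is a quartic that vanishes to first order on each of the four planes $a+b+c=0$, $a+b=c$, and so on. If area were a polynomial $Q$, then $Q^2$ would equal this product of four distinct linear forms. But a square has every irreducible factor appearing with even multiplicity, which is impossible here. This shows area is not a polynomial of any degree, but it presupposes Heron's formula, which the experiment is meant to discover. So I will present the first, elementary argument, which uses only scaling and the fact that degenerate triangles have zero area.

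The main obstacle is the justification that $Q$ must vanish at degenerate triples. These are limits of genuine triangles, not triangles themselves. The continuity of area as a function of the side lengths on the closed cone is intuitively clear, since a triangle collapses as one inequality becomes equality. To make it rigorous without Heron, I would bound the area by $\tfrac12 \cdot \text{base}\cdot\text{height}$, with the height controlled by the deficit in the triangle inequality.
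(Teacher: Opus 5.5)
Your proof is correct, but it takes a different route from the paper's. The paper's proof is a one-line appeal to the experiment. If some quadratic gave the area exactly, the least-squares fit on the twenty gold-standard triangles would have zero residual, whereas the residuals in Figure~\ref{fig:quadratic_wrap} reach about $0.37$. That fit ranges only over $\alpha(a^2+b^2+c^2)+\beta(ab+bc+ca)$ (``we aimed for a homogeneous polynomial, symmetric in $a,b,c$''). Your scaling-and-symmetry step is precisely what justifies this restriction for the statement as posed.

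Both arguments are certificates of the same kind: an overdetermined linear system in $(\alpha,\beta)$ with no solution. The difference is that you pick points where the area is known exactly without Heron: the degenerate triples $(1,1,0)$ and $(2,1,1)$, and the equilateral $(1,1,1)$. So your proof is exact and checkable by hand. The cost is continuity at degenerate triples, which your base--height bound does supply: the altitude $h$ to side $a$ satisfies $h^2\le 2b\,(b+c-a)$. The paper's version depends on the accuracy of the tabulated areas. That is harmless here, since residuals of order $10^{-1}$ dwarf rounding at $10^{-6}$, and it suits the paper's theme: the failure is observed, not reasoned out.

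Finally, your degenerate-triangle idea proves more than you claim. A polynomial equal to the area on triangles vanishes on open pieces of the planes $a=b+c$, $b=c+a$, $c=a+b$. It is therefore divisible by all three linear forms and has degree at least $3$, while scaling forces it to be homogeneous of degree $2$. Hence the area is not a polynomial of any degree, and this does not presuppose Heron's formula, unlike your factorization route.
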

\begin{proof}
If a quadratic gave the exact area for all triangles, the least-squares fit would recover it exactly.
\end{proof}

\begin{wrapfigure}{r}{
%0.48\textwidth
9cm}
\vspace{-1.2\baselineskip}
\centering

\begin{tikzpicture}
\begin{axis}[
    width=9.5cm,
    height=7cm,
    xlabel={},
    ylabel={Area},
    title={{ Actual Area vs.\  Quartic} Approx (fitting Area$^2$)},
    xtick={1,2,3,4,5,6,7,8,9,10,11,12,13,14,15,16,17,18,19,20},
    xticklabels={},
    xmin=0, xmax=21,
    ymin=0, ymax=2.9,
    legend style={at={(0.03,0.97)}, anchor=north west, fill=white, fill opacity=0.8},
    grid=major,
    grid style={gray!30},
]
\addplot[only marks, mark=*, mark size=3.5pt, color=gold] coordinates {
    (1, 1.154186) (2, 0.963894) (3, 0.646708) (4, 0.482996) (5, 2.049419)
    (6, 1.138238) (7, 2.459089) (8, 0.546850) (9, 0.811159) (10, 0.587580)
    (11, 0.700776) (12, 1.586595) (13, 1.332507) (14, 1.066113) (15, 1.080001)
    (16, 1.166371) (17, 0.993562) (18, 1.684140) (19, 1.152307) (20, 0.507586)
};
\addlegendentry{Actual area}
\addplot[only marks, mark=x, mark size=4pt, color=blue, thick] coordinates {
    (1, 1.154186)  (2, 0.963894)  (3, 0.646708)  (4, 0.482996)  (5, 2.049419)
    (6, 1.138238)  (7, 2.459089)  (8, 0.546850)  (9, 0.811159)  (10, 0.587581)
    (11, 0.700776) (12, 1.586595) (13, 1.332507) (14, 1.066113) (15, 1.080001)
    (16, 1.166371) (17, 0.993562) (18, 1.684140) (19, 1.152307) (20, 0.507587)
};
\addlegendentry{Quartic Poly Approx}
\end{axis}
\end{tikzpicture}

\vspace{-0.3cm}

\begin{tikzpicture}
\begin{axis}[
    width=9cm,
    height=5cm,
    xlabel={Triangle index},
    ylabel={\scriptsize Error (Actual $-$ Predicted)},
    xtick={1,2,3,4,5,6,7,8,9,10,11,12,13,14,15,16,17,18,19,20}, 
    xmin=0, xmax=21,
    ymin=-0.1, ymax=0.1,
    % --- Fixed y-axis formatting ---
    ytick={-0.1, -0.05, 0, 0.05, 0.1},
    scaled y ticks=false,
    yticklabel style={
        /pgf/number format/fixed,
        /pgf/number format/precision=2
    },
    % -------------------------------
    grid=major,
    grid style={gray!30},
    ybar,
    bar width=8pt,
]
\addplot[fill=purple!70, draw=purple!50!black] coordinates {
    (1, 0.000000)  (2, -0.000000) (3, 0.000000)  (4, 0.000000)  (5, -0.000000)
    (6, 0.000000)  (7, 0.000000)  (8, -0.000000) (9, -0.000000) (10, -0.000001)
    (11, -0.000000)(12, -0.000000)(13, 0.000000)  (14, -0.000000)(15, 0.000000)
    (16, 0.000000) (17, -0.000000)(18, -0.000000) (19, 0.000000) (20, -0.000001)
};
\end{axis}
\end{tikzpicture}
\vspace{0.2cm}
%\noindent
{ \scriptsize 
$-0.0625(a^4 + b^4 + c^4) + 0.1250\,(a^2b^2 + b^2c^2 + c^2a^2)$, (exact fit)}
\caption{Quartic fit---perfect.}
\label{fig:quartic}
\vspace{-0.8\baselineskip}
\end{wrapfigure}

\subsection{Quartic Quest}
If a quadratic doesn't work, we can look for a \emph{quartic}, a fourth-order polynomial. Area scales quadratically under dilation, so quartics would model the \emph{square} of the area. I started to ask the AI for a quartic fit, slightly insecure in the feeling that twenty triangles might not suffice.

\begin{quoting}[leftmargin=0.5em, rightmargin=0.5em]
{\footnotesize I made a mental note about \emph{why quartic}? Surely, someone in class will ask why we skipped cubics. The rationale is that area transforms quadratically under dilations. If we go to quartics, we'd expect an expression for the square of an area---transforming quartically under dilations. The Pythagorean theorem relates squares of side lengths, so why not a relation among squares of areas? On the other hand, a degree $3$ polynomial transforms cubically under dilations, requiring radicals to relate to the quadratic-transforming area; that's reason enough for omission of cubics.
\par }
\end{quoting}

Asking the AI for a quartic fit went fine---twenty triangles sufficed. The results were decisive: a perfect fit. Of course, this might only work for our twenty triangles, and not for others. (In fact, the resulting quartic coincides with the classical expression equivalent to Heron’s formula.) But no matter, we have made a discovery. We don't need to retrieve the actual polynomial. We can now make a conjecture, deliberately weaker than Heron's formula itself, and intuitive derivations \cite{klain-2004},\cite{nelsen-2001} can supply the rest.

\begin{conjecture}
The squared area of a triangle $\triangle abc$ is a homogeneous quartic polynomial, symmetric in $a,b,c$. \end{conjecture}

\subsection{Historical Context}
Figure~\ref{figH}, probably inspired by a dream the night before and also by the story \cite{clarke-1967}, is not historical. Least Squares Fit didn't come for millennia after Heron. But it could fit in the early 19th century. By then, non-Euclidean geometries had emerged. People may have wondered if analogs of Heron's formula exist in these geometries. To discover them, they might have first engineered a way to discover the Heron formula of Euclidean geometry.  

Also by that time, Lagrange had connected determinants with volumes (1773), and Cauchy and Binet established the determinant product law (1812). Guyton de Morveau introduced the graduated glass cylinder (1784), and Legendre (1805) and Gauss (1795--1810) introduced least squares fit. Thus monks were in a position to discover the Heron formula in the plane and its generalization to tetrahedra in space, and the ingredients for intuitive derivations \cite{klain-2004} were largely available. So the monks could discover that a sextic polynomial in edge lengths expresses the squared volume  of a tetrahedron, setting the stage for the Cayley (1841) and Menger (1928) determinant generalizations.

\section{Afternoon and Aftermath}

\subsection{Back to the Future}
Least Squares is not the only means for going back in time, using modern tools in ancient venues. M.~Buysse \cite{buysse-2023} takes Isaac Newton on a tour of ancient geometric contexts. For Heron's formula, Isaac assumes that $\operatorname{Area}\triangle(x,y,z)$ is a smooth function of the side lengths $x,y,z$. He considers a variation $\delta x$ and develops a partial differential equation for the area function, which involves side lengths. Integrating the equation and exploiting symmetry, he obtains Heron's formula. This Calculus of Variations approach is broadly effective, as \cite{buysse-2023} shows.

In another variation, \cite{pythagoras_via_cavalieri-2015} uses the celebrated Cavalieri Principle (mid 17th century, possibly known to Archimedes) to prove the Pythagorean Theorem. The historic reversal is justified by citing \cite{pythagoras_by_heron-2010}, which proves Pythogoras by Heron. We recommend asking students to work out the details of the argument in \cite{pythagoras_via_cavalieri-2015}, for an effective hands-on experience of the magic of the Cavalieri principle, one that may leave a more lasting impression that the typical textbook treatment.

\subsection{Generalizations} 
Does the Least Squares strategy for triangle areas extend to higher dimensions? A single subway ride was sufficient for choosing 60 generic tetrahedra, computing their volumes by a geometric vector approach, modeling  squared volumes by cubics in squares of the six edge lengths, and finding the best cubic using Least Squares fit. The result was the Heron-Tartaglia formula \cite{sabitov-1998}, displayed here compactly with scalable, easily magnified font:\\ \\
{\tiny
\(
V^2 = \frac{1}{144} \Big( \ell_1^2 \ell_5^2(\ell_2^2 + \ell_3^2 + \ell_4^2 + \ell_6^2 - \ell_1^2 - \ell_5^2) + \ell_2^2 \ell_6^2(\ell_1^2 + \ell_3^2 + \ell_4^2 + \ell_5^2 - \ell_2^2 - \ell_6^2) + \ell_3^2 \ell_4^2(\ell_1^2 + \ell_2^2 + \ell_5^2 + \ell_6^2 - \ell_3^2 - \ell_4^2) - \ell_1^2 \ell_2^2 \ell_3^2 - \ell_1^2 \ell_4^2 \ell_6^2 - \ell_2^2 \ell_4^2 \ell_5^2 - \ell_3^2 \ell_5^2 \ell_6^2 \Big)
\).
} \\

\noindent
(The enumeration of tetrahedral edges $\ell_i$ requires extra care \cite{sabitov-1998}.) \\

How about alternative geometries? Replacing the Euclidean norm by the Taxicab $L^1$ geometry, is there an analog of Heron-Tartaglia? Not if only edge lengths are used, but yes if some additional geometric invariants are introduced \cite{colakoglu-2009}. Can this be discovered via Least Squares? Alas, the subway ride was too short to handle the Taxicab case. \\

In addition to varying the ambient dimension one can also vary subspace dimensions. T.F.~Havel \cite{Havel-2023} gives a formula for the fourth power of the volume of a tetrahedron using a polynomial in areas of facets and of ``medial sections". Discovering that facet areas alone do not suffice is probably straightforward, but discovering Havel's formula would be considerably more challenging: one would first have to ``discover" appropriate supplementary measurements, such as medial section areas.

\subsection{Symmetry}
Together with varying dimensions, one can also restrict the context by imposing conditions of symmetry. Klain gives intuitive derivations of simpler formulas for \emph{isosceles} tetrahedra \cite{klain-2004}, and for \emph{reversible} tetrahedra \cite{klain-2023}; the assumed symmetries are congruence of all facets, and congruence of pairs of facets, respectively. These formulas factor, and the factors have geometric interpretations.  Once an aforementioned symmetry is imposed, discovering a volume formula by Least Squares seems attainable. One has the added challenge of generating arbitrary tetrahedra subject to a symmetry restriction, but in these contexts this seems feasible.

\subsection{Going Beyond Length} The volume and area  formulas that we (re)discovered so far are all length-based. But we can apply Least Squares to discover formulas of other types as well. Using John H. Conway's triangle notation, we write $S_A$ for the inner product of the vectors emanating from the vertex $A$ of $\triangle ABC$, and similarly for $S_B,S_C$. Then we have the formula
\[
\left(\operatorname{Area} \triangle ABC \right)^2
= \frac{1}{4} \left( S_A S_B + S_B S_C + S_C S_A \right).
\]

\begin{wrapfigure}[8]{r}{0.1\textwidth}
\includegraphics[width=0.1\textwidth]{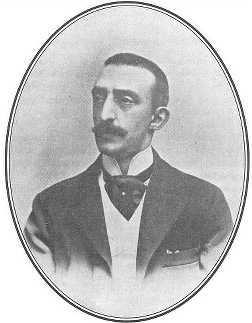}
\caption*{\tiny J.~Dur\'an Loriga}
\end{wrapfigure}

\, \\
This formula, and the triangle notation popularized by Conway, can be found in J.~Dur\'an Loriga's 1894 paper \cite{duran-loriga-1894}. We were pleasantly surprised to see that a scan of the original paper is available from the National Library of Spain. Could we obtain the inner products area formula by Least Squares? Probably, but let's make a spatial analog of \emph{Back To The Future}: work out a 3-dimensional case, then look back to the plane.
According to M.~Hajja and H.~Martini \cite{hajja-martini-2013}, the true generalization of triangles
to higher dimensions is not general simplices but rather \emph{orthocentric simplices}. These are tetrahedra whose four altitudes meet at a point, a property not satisfied by generic tetrahedra. Equivalently, an orthocentric simplex is a tetrahedron for which the inner product of any two vectors emanating from a given vertex $A$ is the same for all pairs of vectors so emanating, and that's the property we'll use. \\

\vspace{-5pt}
\begin{wrapfigure}[7]{l}{0.30\textwidth}
  \centering
  \begin{tikzpicture}[scale=0.30, line join=round, line cap=round]
    
    % Coordinates
    \coordinate (A) at (0, 3.8);
    \coordinate (B) at (-3.5, -1.8);
    \coordinate (C) at (1, -3.2);
    \coordinate (D) at (3.8, -0.8);
    
    % Point H on CD for the altitudes
    \coordinate (H) at (2.96, -1.52);

    % Background Fill
    \fill[gray!8] (A) -- (B) -- (C) -- (D) -- cycle;

    % Dashed Lines
    \draw[dashed, semithick] (B) -- (D);
    \draw[dashed, semithick] (A) -- (H);
 %  \draw[dashed, semithick] (B) -- (H);

    % Solid Base Lines
    \draw[semithick] (B) -- (C) -- (D);

    % Vectors
    \draw[->, ultra thick, >=Latex] (A) -- (B) node[midway, above left, xshift=0.2cm , yshift=0.4cm] 
    {\footnotesize $\vec{v}_1 = \vec{AB}$};
    \draw[->, ultra thick, >=Latex] (A) -- (C) node[midway, left=-0.1cm, yshift=-0.3cm] 
    {\footnotesize $\vec{v}_3 = \vec{AC}$};
    \draw[->, ultra thick, >=Latex] (A) -- (D) node[midway, above right, xshift=-0.1cm] 
    {\footnotesize$\vec{v}_2 = \vec{AD}$};

    % Vertex Labels
    \node[above]       at (A) {\large $A$};
    \node[left]        at (B) {\large $B$};
    \node[below right] at (C) {\large $C$};
    \node[right]       at (D) {\large $D$};

  \end{tikzpicture}
\end{wrapfigure}

We want to generate 60 arbitrary orthocentric tetrahedra. We can do this by choosing two vectors, $\vec v_1 , \vec v_2$,  emanating from $A$ at random (with integer lengths for computational ease), and then we need to choose $\vec v_3$ so that the inner products $< \vec v_3, \vec v_1>$ and $< \vec v_3, v_2>$ are equal to $< \vec v_1, v_2>$. This turns out to ensure that the resulting simplex will close up and be orthocentric. Such simplices have the property that any pair of edge vectors emanating from the same vertex has the same inner product as any other pair. So we take the inner products $S_A,S_B,S_C,S_D$, using Conway-Dur\'an Loriga notation, and consider cubic polynomials in them as candidates for the square of the volume. Least squares fit gives the following, perhaps surprisingly simple expression:
\[  
%\boxed{\,
V^2=\tfrac1{36}\bigl(S_AS_BS_C+S_AS_BS_D+S_AS_CS_D+S_BS_CS_D\bigr)
%\,}
.
\]

We have not found this particular formula for the squared volume of an orthocentric tetrahedron in the literature, but it is routinely derivable: one starts from the Gram-determinant expression for the volume of a general tetrahedron (\cite{berger-1987} \S~9.7) and applies the simplifications afforded by orthocentricity (\cite{edmonds-hajja-martini-2005}, \S~3.1).
It is evident that its Dur\'an Loriga 2-dimensional analog is also discoverable by Least Squares.

\subsection{Looking Back}
Reflecting on the experience, the class presentation probably would not have taken place if not for easy AI access. I might have outlined the idea at the board, but I would have had neither the areas table \ref{tab:triangles}, nor the regression plots \ref{fig:quadratic_wrap}, \ref{fig:quartic}. Producing these sans AI would have taken much longer, and might have been foregone altogether. The availability of AI has profoundly shifted the turnaround time from vague idea to classroom-ready presentation. Next fall my class will begin at 11:00 instead of noon. Will AI improvements compensate?
\footnote{The title of this article is inspired by the book \emph{A Swim in a Pond in the Rain}, by George Saunders. The image was drawn in collaboration with Gemini AI, possibly influenced by a dream.}

\begin{figure}[H]
\hypertarget{fig:cartoon}{} % Create a target for the link
\caption{An Ahistorical Geometric Formula Discovery Laboratory \\ 
{\footnotesize (Image drawn in collaboration with Gemini AI, possibly influenced by a dream and a story \cite{clarke-1967})}}
\centering
\includegraphics[width=0.63 \textwidth]{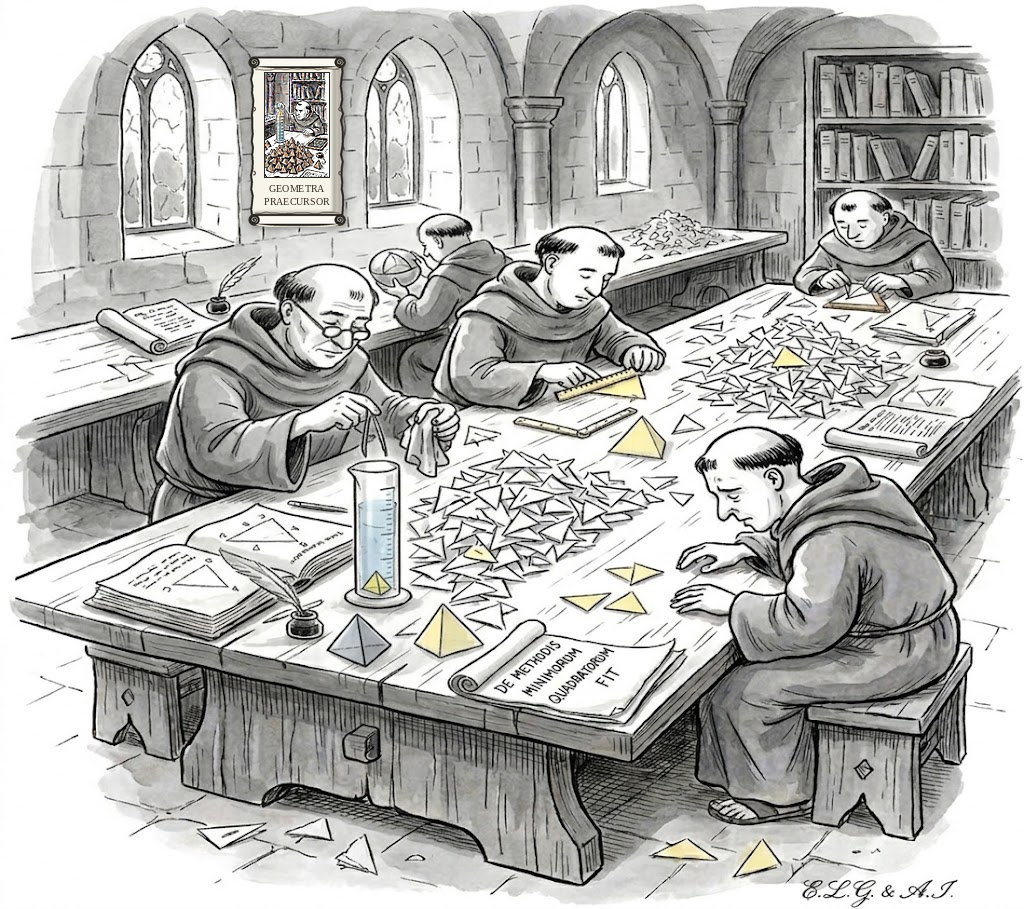}
\label{figH}
\end{figure}

\bigskip\noindent
Department of Mathematics, University of Massachusetts Boston,   USA \\
\qquad \href{mailto:eric.grinberg@umb.edu}{eric.grinberg@umb.edu} 

\bigskip

%\thanks{The title of this writing is in homage to George Saunders' book \emph{A Swim in a Pond in the Rain}.}

\end{document}